\newtheorem{theorem}{Theorem}\numberwithin{theorem}{section}
\newtheorem{definition}[theorem]{Definition}
\newtheorem*{nottation}{Notation}
\newtheorem{lemma}[theorem]{Lemma}
\newtheorem{corollary}[theorem]{Corollary}
\newtheorem{proposition}[theorem]{Proposition}
\newtheorem{theoremm}{Theorem}\numberwithin{theoremm}{subsection}
\newtheorem{deffinition}[theoremm]{Definition}
\numberwithin{theoremmm}{subsubsection}
\theoremstyle{remark}
\newtheorem{remark}[theorem]{Remark}
\newcommand{\Rad}{\operatorname{Rad}}
\newcommand{\Aut}{\operatorname{Aut}}
\newcommand{\Alt}{\mathcal{A}}
\newcommand{\aff}{\mathrm{aff}}
\newcommand{\Aff}{\operatorname{Aff}}
\newcommand{\lcm}{\operatorname{lcm}}
\newcommand{\sh}{\operatorname{sh}}
\newcommand{\ord}{\operatorname{ord}}
\newcommand{\Hol}{\operatorname{Hol}}
\newcommand{\A}{\operatorname{A}}
\newcommand{\C}{\operatorname{C}}
\newcommand{\meo}{\operatorname{meo}}
\newcommand{\mao}{\operatorname{mao}}
\newcommand{\maffo}{\operatorname{maffo}}
\newcommand{\id}{\operatorname{id}}
\newcommand{\im}{\operatorname{im}}
\newcommand{\G}{\mathcal{G}}
\newcommand{\fin}{\mathrm{fin}}
\newcommand{\rel}{\mathrm{rel}}
\newcommand{\Mod}[1]{\ (\textup{mod}\ #1)}
\newcommand{\cha}{\operatorname{char}}
\begin{document}

\title{Finite groups with an automorphism of large order}

\author{Alexander Bors\thanks{University of Salzburg, Mathematics Department, Hellbrunner Stra{\ss}e 34, 5020 Salzburg, Austria. \newline E-mail: \href{mailto:alexander.bors@sbg.ac.at}{alexander.bors@sbg.ac.at} \newline The author is supported by the Austrian Science Fund (FWF):
Project F5504-N26, which is a part of the Special Research Program \enquote{Quasi-Monte Carlo Methods: Theory and Applications}. \newline 2010 \emph{Mathematics Subject Classification}: 20B25, 20D25, 20D45. \newline \emph{Key words and phrases:} Finite groups, Automorphisms, Abelian groups, Solvable groups, Solvable radical.}}

\date{\today}

\maketitle

\abstract{Let $G$ be a finite group, and assume that $G$ has an automorphism of order at least $\rho|G|$, with $\rho\in\left(0,1\right)$. Generalizing recent analogous results of the author on finite groups with a large automorphism cycle length, we prove that if $\rho>1/2$, then $G$ is abelian, and if $\rho>1/10$, then $G$ is solvable, whereas in general, the assumption implies $[G:\Rad(G)]\leq\rho^{-1.78}$, where $\Rad(G)$ denotes the solvable radical of $G$. Furthermore, we generalize an example of Horo\v{s}evski\u{\i} to show that in finite groups, the quotient of the maximum automorphism order by the maximum automorphism cycle length may be arbitrarily large.}

\section{Introduction}\label{sec1}

\subsection{Motivation and main results}\label{subsec1P1}

The purpose of this paper is to study finite groups that may be viewed as \enquote{extreme} with respect to their maximum automorphism order. More generally, many authors have studied finite groups satisfying \enquote{extreme} quantitative conditions of various kinds. We mention the following examples: A variety of papers deals with finite groups in which some automorphism raises some minimum fraction of elements to the $e$-th power for $e=-1,2,3$, see \cite{Mil29a,Mil29b,LM72a,Lie73a,LM73a,Mac75a,Pot88a,DM89a,Zim90a,Heg05a}. Wall classified the finite groups $G$ having more than $\frac{1}{2}|G|-1$ involutions \cite{Wal70a}, and this was extended to a classification of those $G$ with more than $\frac{1}{2}|G|-1$ subgroups of prime order by Burness and Scott \cite{BS09a}.

In \cite{Bor15a} and \cite{Bor15b}, we studied finite groups having an automorphism with a cycle of length at least $\rho|G|$, for some fixed $\rho\in\left(0,1\right)$. We proved that if $\rho>\frac{1}{2}$, then $G$ is abelian \cite[Theorem 1.1.7]{Bor15a}, and if $\rho>\frac{1}{10}$, then $G$ is solvable \cite[Corollary 1.1.2(1)]{Bor15b}. Furthermore, we showed that for any fixed value of $\rho$, the index of the solvable radical $\Rad(G)$ in $G$ is bounded from above in terms of $\rho$ if $G$ has such a long automorphism cycle \cite[Theorem 1.1.1(1)]{Bor15b}.

In this paper, we strengthen these results, replacing automorphism cycle lengths by automorphism orders:

\begin{theoremm}\label{mainTheo1}
Let $G$ be a finite group.

(1) If $G$ has an automorphism of order greater than $\frac{1}{2}|G|$, then $G$ is abelian.

(2) If $G$ has an automorphism of order greater than $\frac{1}{10}|G|$, then $G$ is solvable.

(3) For any $\rho\in\left(0,1\right)$, if $G$ has an automorphism of order at least $\rho|G|$, then $[G:\Rad(G)]\leq\rho^{E_1}$, where $E_1=(\log_{60}(6)-1)^{-1}=-1.7781\ldots$.
\end{theoremm}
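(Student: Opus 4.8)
The plan is to reduce the whole statement to a single structural estimate on the maximum automorphism order $\mao(G)=\meo(\Aut(G))$, namely the inequality
\begin{equation}\tag{$\star$}
\mao(G)\le\lvert\Rad(G)\rvert\cdot[G:\Rad(G)]^{\log_{60}(6)},
\end{equation}
and then to read off the theorem by elementary algebra. Indeed, write $R=\Rad(G)$ and suppose $G$ has an automorphism of order $\ge\rho\lvert G\rvert$, so $\mao(G)\ge\rho\lvert G\rvert$. Combining this with $(\star)$ and $\lvert G\rvert=\lvert R\rvert\cdot[G:R]$ gives $\rho\lvert R\rvert[G:R]\le\lvert R\rvert[G:R]^{\log_{60}(6)}$, hence $\rho\le[G:R]^{\log_{60}(6)-1}$. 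Since $\log_{60}(6)-1=E_1^{-1}<0$, raising both sides to the (negative) power $E_1$ reverses the inequality and yields $[G:R]\le\rho^{E_1}$, exactly as claimed. So the entire problem is to establish $(\star)$, and the appearance of $60$ and $6$ is dictated by the extremal simple group $\Alt_5\cong\PSL(2,5)$, for which $\lvert\Alt_5\rvert=60$ and $\meo(\Aut(\Alt_5))=\meo(\Sym_5)=6=60^{\log_{60}(6)}$.

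First I would peel off the solvable radical. As $R$ is characteristic, every $\alpha\in\Aut(G)$ induces $\bar\alpha\in\Aut(G/R)$, and each $\langle\alpha\rangle$-orbit on $G$ projects onto a $\langle\bar\alpha\rangle$-orbit on $G/R$, lying inside a single coset of $R$ after passing to the appropriate power of $\alpha$; this immediately bounds any one cycle length of $\alpha$ by $\lvert R\rvert$ times a cycle length of $\bar\alpha$. The target reduction is $\mao(G)\le\lvert R\rvert\cdot\mao(G/R)$, reducing $(\star)$ to the case of the radical-free quotient. The subtle point — and a secondary obstacle — is that $\ord(\alpha)$ is the \emph{least common multiple} of its cycle lengths across the different $R$-cosets, so the clean single-cycle bound does not transfer to orders verbatim; controlling this requires an auxiliary estimate showing that the automorphisms of $G$ that are trivial modulo $R$ (the stability group of $1\le R\le G$ together with $\Aut(R)$) contribute at most the factor $\lvert R\rvert$, which ultimately rests on bounds for $\meo(\Aut(R))$ for the solvable group $R$.

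Next I would treat a radical-free group $\bar G=G/R$. Here $\Soc(\bar G)=\prod_j U_j^{m_j}$ is a direct product of non-abelian simple groups (with $U_j$ pairwise non-isomorphic), it is characteristic and self-centralizing, so restriction embeds $\Aut(\bar G)$ into $\Aut(\Soc(\bar G))\cong\prod_j\bigl(\Aut(U_j)\wr\Sym_{m_j}\bigr)$; hence $\mao(\bar G)=\meo(\Aut(\bar G))\le\meo(\Aut(\Soc(\bar G)))$. Using that the maximal element order of a direct product is the lcm of the factors' and the standard wreath estimate $\meo(A\wr\Sym_m)\le\meo(A)\cdot\meo(\Sym_m)$, it suffices to check, factor by factor, that $\meo(\Aut(U_j))\cdot\meo(\Sym_{m_j})\le\bigl(\lvert U_j\rvert^{m_j}\bigr)^{\log_{60}(6)}$. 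Granting the simple-group bound $\meo(\Aut(U))\le\lvert U\rvert^{\log_{60}(6)}$ below, this reduces to $\meo(\Sym_m)\le\lvert U\rvert^{(m-1)\log_{60}(6)}$, and since $\lvert U\rvert\ge 60$ the right-hand side is at least $6^{m-1}$, which dominates Landau's function $\meo(\Sym_m)$ for every $m\ge 1$. Because $\lvert\Soc(\bar G)\rvert\le\lvert\bar G\rvert=[G:R]$, this yields $\mao(G/R)\le[G:R]^{\log_{60}(6)}$ and completes $(\star)$.

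The crux, and the main obstacle, is the uniform simple-group estimate $\meo(\Aut(S))\le\lvert S\rvert^{\log_{60}(6)}$ for every non-abelian finite simple group $S$, with equality at $S=\Alt_5$. This is a case-by-case verification relying on the classification of finite simple groups together with known bounds on element orders in almost simple groups: for the alternating groups one compares Landau's function for $\Sym_n$ (including the outer automorphism) against $\lvert\Alt_n\rvert$; for the groups of Lie type one bounds element orders by the orders of maximal tori, regular unipotent elements, and the contributions of diagonal, field and graph automorphisms, against $\lvert S\rvert\approx q^{\dim}$; and the finitely many sporadic groups are checked directly. Asymptotically the ratio $\log_{\lvert S\rvert}\meo(\Aut(S))$ tends to values well below $\log_{60}(6)=0.4376\ldots$ (for instance it tends to $1/3$ along $\PSL(2,q)$), so the inequality is comfortable for large $S$ and tight only at the very smallest groups such as $\Alt_5$ and $\PSL(2,7)$; verifying it robustly across all families, especially the small exceptional cases, is where the real work lies.
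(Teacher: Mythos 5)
Your proposal has genuine gaps; the most serious is that it only engages with part~(3) of the theorem. Part~(1) cannot be obtained from your inequality $(\star)$ at all: for $\rho>\frac{1}{2}$, the bound $[G:\Rad(G)]\leq\rho^{E_1}<2^{1.78}<4$ forces $G/\Rad(G)$ to be trivial (a nontrivial semisimple group has order at least $60$), hence only gives that $G$ is \emph{solvable}, not abelian. The paper proves (1) by a separate induction on $|G|$: it peels off a minimal $\alpha$-invariant elementary abelian normal subgroup $B$, shows the induced automorphism on $G/B$ still has relative order $>\frac{1}{2}$ (so $G/B$ is abelian, hence $G'\leq B$), and then analyzes the conjugation action of $G/B$ on $B$ in two cases ($B$ cyclic vs.\ not), using Horo\v{s}evski\u{\i}'s lemmas and the classification of automorphisms with a cycle longer than $\frac{1}{2}|G|$ from the earlier paper to conclude abelianness; none of this is present in your proposal. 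Part~(2) is also missing: it does follow quickly from your framework (if $\rho>\frac{1}{10}$ then $[G:\Rad(G)]\leq 60$, so $G/\Rad(G)$ is trivial or isomorphic to $\Alt_5$, and the latter is excluded because $\mao_{\rel}(\Alt_5)=6/60=1/10$ would contradict $\mao_{\rel}(G)>1/10$ via the reduction), but you never carry out this step.

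The second gap is that the reduction $\mao(G)\leq|\Rad(G)|\cdot\mao(G/\Rad(G))$ --- which you correctly flag as the subtle point --- is left unresolved, and your proposed fix does not work as stated. The order of $\alpha$ equals $m\cdot\ord(\alpha^m)$ with $m=\ord(\bar\alpha)$, and $\ord(\alpha^m)$ is the \emph{least common multiple over all $R$-cosets} of the orders of the induced affine maps $\A_{r,\alpha^m_{\mid R}}$ of $R$; a bound on $\meo(\Aut(R))$ controls each individual automorphism order but neither the orders of these affine maps nor their lcm across cosets. The paper's resolution is precisely its Lemma \ref{elAbLem}: for $B$ elementary abelian and $\beta\in\Aut(B)$, $\lcm_{x\in B}\ord(\A_{x,\beta})\leq|B|$ (proved via the primary rational canonical form and the order formula for companion matrices), combined with an induction in Lemma \ref{radqLem} that peels off one minimal $\alpha$-invariant \emph{elementary abelian} layer at a time rather than the whole radical at once. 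Your semisimple-quotient analysis (socle embedding, wreath product estimate, and the CFSG-based bound $\meo(\Aut(S))\leq|S|^{\log_{60}(6)}$) is essentially sound and amounts to re-deriving what the paper simply cites from its predecessor ([Bor15b, Lemma 3.4 and Theorem 2.2.3]), but without the lcm lemma and without any argument for parts (1) and (2), the proposal does not prove the stated theorem.
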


A few comments relating this to the results on cycle lengths. Automorphisms $\alpha$ of finite groups having a cycle of length $\ord(\alpha)$ (following the terminology in \cite{GPS15a}, such cycles will be referred to as \textit{regular}) have been extensively studied by Horo\v{s}evski\u{\i} in \cite{Hor74a}. He gave examples of automorphisms of finite groups without a regular cycle (i.e., whose order is larger than the largest cycle length). In Section \ref{sec3}, we will generalize one of Horo\v{s}evski\u{\i}'s examples to show that in finite groups, the quotient of the maximum automorphism order by the maximum automorphism cycle length may be arbitrarily large.

On the other hand, Horo\v{s}evski\u{\i} also gave conditions on finite groups $G$ assuring that every automorphism of $G$ has a regular cycle, namely if $G$ is either semisimple (i.e., has no nontrivial solvable normal subgroups) \cite[Theorem 1]{Hor74a} or nilpotent \cite[Corollary 1]{Hor74a}. Note that in view of the latter, Theorem \ref{mainTheo1} implies the following: \textit{Every automorphism $\alpha$ of a finite group $G$ such that $l:=\ord(\alpha)>\frac{1}{2}|G|$ has a cycle of length $l$.} In particular, for $\rho>\frac{1}{2}$, the conditions \enquote{$G$ has an automorphism with a cycle of length $\rho|G|$.} and \enquote{$G$ has an automorphism of order $\rho|G|$.} are equivalent, and by \cite[Corollary 1.1.8]{Bor15a}, we obtain a complete classification of the pairs $(G,\alpha)$ where $G$ is a finite group and $\alpha$ an automorphism of $G$ such that $\ord(\alpha)>\frac{1}{2}|G|$.

We note that both in \cite{Bor15a} and in \cite{Bor15b}, we did not only study automorphisms, but a larger class of permutations on finite groups, so-called bijective affine maps:

\begin{deffinition}\label{affineDef}
Let $G$ be a finite group.

(1) For an element $x\in G$ and an endomorphism $\varphi$ of $G$, the \textbf{(left-)affine map of $G$ with respect to $x$ and $\varphi$} is the map $\A_{x,\varphi}:G\rightarrow G,g\mapsto x\varphi(g)$.

(2) The group of bijective affine maps of $G$ (which are just those $\A_{x,\varphi}$ where $\varphi$ is an automorphism of $G$) is denoted by $\Aff(G)$.
\end{deffinition}

We also had results for such maps, namely that a finite group $G$ having a bijective affine map with a cycle of length greater than $\frac{1}{4}|G|$ is solvable \cite[Theorem 1.1.1(2)]{Bor15b} and that $[G:\Rad(G)]$ is bounded from above in terms of $\rho$ in a finite group $G$ having a bijective affine map cycle of length at least $\rho|G|$. This can be strengthened in the same way as the results on automorphisms:

\begin{theoremm}\label{mainTheo2}
(1) Let $G$ be a finite group such that some $A\in\Aff(G)$ has order greater than $\frac{1}{4}|G|$. Then $G$ is solvable.

(2) Let $\rho\in\left(0,1\right)$ and let $G$ be a finite group such that some $A\in\Aff(G)$ has order at least $\rho|G|$. Then $[G:\Rad(G)]\leq\rho^{E_2}$, where $E_2=(\log_{60}(30)-1)^{-1}=-5.9068\ldots$.
\end{theoremm}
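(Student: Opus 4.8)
The plan is to develop an affine analogue of the machinery behind Theorem \ref{mainTheo1}, replacing the maximum automorphism order $\mao$ by the maximum affine map order $\maffo$. The basic identification is $\Aff(G)\cong\Hol(G)=G\rtimes\Aut(G)$, so that $\maffo(G)=\meo(\Hol(G))$ and, via the semidirect-product order formula $\ord(\A_{x,\varphi})=\ord(\varphi)\cdot\ord\bigl(\prod_{i=0}^{\ord(\varphi)-1}\varphi^i(x)\bigr)$, one has the crude estimate $\maffo(K)\le\meo(K)\cdot\meo(\Aut(K))$ for every finite group $K$. Given $A\in\Aff(G)$ with $\ord(A)\ge\rho|G|$, I would pass to the induced affine map $\bar A\in\Aff(S)$ on the semisimple quotient $S:=G/\Rad(G)$ (legitimate since $\Rad(G)$ is characteristic) and factor $\ord(A)=\ord(\bar A)\cdot\bigl(\ord(A)/\ord(\bar A)\bigr)$. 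The first key ingredient is an \emph{inflation bound} $\ord(A)/\ord(\bar A)\le|\Rad(G)|$: the power $A^{\ord(\bar A)}$ stabilises every $\Rad(G)$-coset and restricts on each to an affine map of $\Rad(G)$, so its order is at most $\maffo(\Rad(G))$, and one is reduced to the lemma $\maffo(H)\le|H|$ for all finite $H$ — the affine counterpart of the bound $\mao(H)\le|H|$ underlying Theorem \ref{mainTheo1}, whose proof rests on the vanishing of the norm $\prod_i\varphi^i(x)$ off the fixed-point subgroup of $\varphi$.

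The second key ingredient is a \emph{semisimple bound} $\maffo(S)\le|S|^{\log_{60}(30)}$ for every finite group $S$ with $\Rad(S)=1$. Here I would write $\Soc(S)=\prod_i T_i^{a_i}$ as a product of non-abelian simple groups, use $\Soc(S)\le S\le\Aut(\Soc(S))=\prod_i(\Aut(T_i)\wr\Sym_{a_i})$, and combine the crude estimate above with a Landau-type analysis of maximum element orders in the wreath factors $\Aut(T_i)\wr\Sym_{a_i}$: permuting $a_i$ isomorphic copies through $\Sym_{a_i}$ multiplies an element order by at most $a_i$ while raising the group order to the $a_i$-th power, so the normalised quantity $\log_{|S|}\maffo(S)$ is maximised on a single simple factor. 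It then remains to check, for each finite simple $T$, that $\meo(T)\cdot\meo(\Aut(T))\le|T|^{\log_{60}(30)}$; this is where the classification of finite simple groups is invoked, and one verifies that $\log_{|T|}\bigl(\meo(T)\meo(\Aut(T))\bigr)$ is largest for $T=\Alt_5$, where $\meo(\Alt_5)\cdot\meo(\Aut(\Alt_5))=5\cdot6=30=60^{\log_{60}(30)}$.

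Assembling the two ingredients proves part (2): from $\rho|G|\le\ord(A)\le\maffo(S)\cdot|\Rad(G)|\le|S|^{\log_{60}(30)}\cdot|\Rad(G)|$ and $|G|=|S|\cdot|\Rad(G)|$ one cancels $|\Rad(G)|$ to obtain $\rho\le|S|^{\log_{60}(30)-1}$, that is $[G:\Rad(G)]=|S|\le\rho^{E_2}$ with $E_2=(\log_{60}(30)-1)^{-1}$. Part (1) needs a sharpening, since the crude estimate only yields the weaker threshold $30/60=\tfrac12$. After the same reduction to the semisimple case, I would instead determine the \emph{exact} maximum of $\maffo(S)/|S|$ over all nontrivial semisimple $S$ by a finite inspection of the minimal non-solvable configurations; the expectation is that this maximum equals $\tfrac14$, attained at $\Alt_5$ with $\maffo(\Alt_5)=\meo(\Hol(\Alt_5))=15$ (the value $30$ allowed by the crude bound is not realised because $\Hol(\Alt_5)$ contains $\Alt_5\times\Alt_5$ with index $2$ and has no element of order $30$). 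Any non-solvable $G$ then satisfies $\maffo(G)/|G|\le\maffo(S)/|S|\le\tfrac14$, contradicting the hypothesis $\ord(A)>\tfrac14|G|$.

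The main obstacle is the classification-based bookkeeping that underlies both constants. For the exponent $\log_{60}(30)$ one must bound $\meo(\Aut(T))$ uniformly across all families of simple groups — the delicate case being the groups of Lie type, where field and graph-field automorphisms make $\meo(\Aut(T))$ genuinely larger than $\meo(T)$ — and confirm that $\Alt_5$ remains extremal after forming the quotient $\meo(T)\meo(\Aut(T))/\log|T|$; the Landau-type reduction from products and powers to a single simple factor must be executed carefully enough that wreathing never beats the single-factor ratio. For part (1) the additional difficulty is pinning the sharp constant $\tfrac14$, which requires computing actual maximum element orders in the holomorphs of the relevant small groups rather than the convenient upper bound $\meo(\cdot)\meo(\Aut(\cdot))$.
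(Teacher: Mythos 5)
Your overall architecture --- reduce modulo the solvable radical, bound $\maffo$ on semisimple groups with the exponent $\log_{60}(30)$, and pin the sharp constant $\frac{1}{4}$ on small semisimple groups for part (1) --- is the same as the paper's. But your \emph{inflation bound} contains a genuine gap, and it sits exactly where the paper has to work hardest. You argue that $A^{\ord(\bar A)}$ stabilises every $\Rad(G)$-coset, restricts on each coset to (a copy of) an affine map of $\Rad(G)$, and hence has order at most $\maffo(\Rad(G))$. This last step is invalid: the order of a permutation preserving a partition into invariant blocks is the \emph{least common multiple} of the orders of its restrictions, not their maximum. Concretely, the restriction to the coset $g\Rad(G)$ is isomorphic as an FDS to $\A_{z_g,\beta}$ with common automorphism part $\beta=(\alpha^{\ord(\bar A)})_{\mid\Rad(G)}$ but varying translation parts $z_g$, and since $\ord(\A_{z,\beta})=\ord(\beta)\cdot\ord(\sh_{\beta}(z))$, nothing prevents different cosets from contributing norm elements $\sh_{\beta}(z_g)$ of incomparable orders, in which case the lcm strictly exceeds the maximum. (For elementary abelian groups the two coincide, since each $\ord(\sh_\beta(z))$ is $1$ or $p$; for a general solvable radical they need not.) So what your argument actually requires is $\lcm_{z\in H}\ord(\A_{z,\beta})\leq|H|$ for an \emph{arbitrary} finite solvable $H$ and arbitrary $\beta\in\Aut(H)$ --- a statement you never prove and cannot wave through as \enquote{the affine counterpart of $\mao(H)\leq|H|$}: Horo\v{s}evski\u{\i}'s Theorem 2 concerns automorphisms only, and even the max-version $\maffo(H)\leq|H|$ is, for solvable $H$, a \emph{consequence} of the very reduction lemma you are trying to prove (via $\maffo_{\rel}(H)\leq\maffo_{\rel}(H/\Rad(H))=1$), so invoking it here is circular. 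The paper avoids the issue by never passing over the full radical in one step: Lemma~\ref{radqLem} inducts on $|\Rad(G)|$, peeling off one minimal $\alpha$-invariant \emph{elementary abelian} normal subgroup $B$ at a time, and Lemma~\ref{elAbLem} proves the needed lcm bound $\lcm_{x\in B}\ord(\A_{x,\beta})\leq|B|$ precisely in that elementary abelian case, via the primary rational canonical form and Elspas' formula for orders of companion matrices.

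Two further points. First, your semisimple bound is an ab-initio reproof of what the paper simply cites from \cite{Bor15b}: $\Lambda_{\aff}(H)\leq|H|^{\log_{60}(30)}$ (Lemma 3.4 there) together with $\maffo(H)=\Lambda_{\aff}(H)$ for semisimple $H$ (Theorem 2.2.3 there). If you do take the direct route via $\maffo(S)\leq\meo(S)\cdot\meo(\Aut(S))$ and the classification, your wreath-product step is false as stated: passing through $\Sym_{a}$ does not multiply element orders by at most $a$ --- the top group contributes lcm's of cycle lengths (Landau's function), so the reduction to a single simple factor needs the standard but genuinely more careful lcm analysis. Second, both your part (2) and your part (1) (through the inequality $\maffo(G)/|G|\leq\maffo(S)/|S|$) rest on the defective inflation bound, so neither survives until it is repaired; the remainder of your part (1) outline (sharp value $\frac{1}{4}$, attained by $\Alt_5$ with $\maffo(\Alt_5)=15$) does match the paper, which in turn delegates that finite verification to the proof of Corollary 1.1.2 in \cite{Bor15b}.
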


Finally, we remark that, just as for the results on cycle lengths, the constants $\frac{1}{2},\frac{1}{10}$ and $\frac{1}{4}$ in Theorems \ref{mainTheo1}(1,2) and \ref{mainTheo2}(1) respectively cannot be lowered further, as follows by considering maximum automorphism orders in finite dihedral groups (for Theorem \ref{mainTheo1}(1)) and in the alternating group $\Alt_5$.

\subsection{Notation}\label{subsec1P2}

By $\mathbb{N}^+$, we denote the set of positive integers. For a function $f$ and a set $M$, we denote by $f[M]$ the element-wise image of $M$ under $f$, and by $f_{\mid M}$ the restriction of $f$ to $M$. The order of a group element $g$ is denoted by $\ord(g)$. We write $N \cha G$ for \enquote{$N$ is a characteristic subgroup of $G$}. The finite field with $q$ elements is denoted by $\mathbb{F}_q$, and the logarithm with respect to a base $c>1$ by $\log_c$. We also use the following notation, most of which was already introduced in \cite{Bor15a} and \cite{Bor15b}:

\begin{nottation}\label{lambdaNot}
(1) Let $X$ be a finite set, $\psi$ a permutation on $X$. We denote by $\Lambda(\psi)$ the largest cycle length of $\psi$ and set $\lambda(\psi):=\frac{1}{|X|}\Lambda(\psi)$.

(2) For a finite group $G$, we define $\Lambda(G):=\max_{\alpha\in\Aut(G)}{\Lambda(\alpha)}$, $\lambda(G):=\frac{1}{|G|}\Lambda(G)$, $\Lambda_{\aff}(G):=\max_{A\in\Aff(G)}{\Lambda(A)}$ and $\lambda_{\aff}(G):=\frac{1}{|G|}\Lambda_{\aff}(G)$.

(3) For a finite group $G$, we denote by $\meo(G)$ the maximum element order of $G$ and set $\mao(G):=\meo(\Aut(G))$ and $\maffo(G):=\meo(\Aff(G))$.
\end{nottation}

\section{Finite dynamical systems and finite dynamical groups}\label{sec2}

This section gives a quick overview on some basic concepts and facts which we will need.

\begin{definition}\label{fdsDef}
(1) A \textbf{finite dynamical system} (\textbf{FDS}) is a finite set $S$ together with a function $f:S\rightarrow S$, a so-called \textbf{self-transformation of $S$}. It is called \textbf{periodic} if and only if $f$ is bijective.

(2) For FDSs $(S,f)$ and $(T,g)$, an \textbf{(FDS) homomorphism between $(S,f)$ and $(T,g)$} is a function $\eta:S\rightarrow T$ such that $\eta\circ f=g\circ\eta$. The \textbf{image of $\eta$}, denoted by $\im(\eta)$, is the FDS $(\eta[S],g_{\mid\eta[S]})$ An \textbf{(FDS) isomorphism} is a bijective FDS homomorphism.

(3) If $(S_1,f_1),\ldots,(S_r,f_r)$ are FDSs, their \textbf{(FDS) product} is defined as the FDS $(S_1\times\cdots\times S_r,f_1\times\cdots\times f_r)$, where $f_1\times\cdots\times f_r$ maps $(s_1,\ldots,s_r)\mapsto(f_1(s_1),\ldots,f_r(s_r))$.
\end{definition}

\cite[Sections 1--3]{Her05a} provides an introduction to the theory of FDSs. We will only need the following proposition summarizing some elementary facts on cycle lengths in FDS products:

\begin{proposition}\label{fdsProp}
Let $(S_1,f_1),\ldots,(S_r,f_r)$ be periodic FDSs.

(1) The cycle length of $(s_1,\ldots,s_r)\in S_1\times\cdots\times S_r$ under $f_1\times\cdots\times f_r$ is the least common multiple of the cycle lengths of the $s_i$ under $f_i$.

(2) If each $f_i$ has a regular cycle, then so does $f_1\times\cdots\times f_r$.\qed
\end{proposition}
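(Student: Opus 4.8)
The plan is to reduce both parts to the single elementary observation that, for a permutation $f$ of a finite set and a point $s$, the cycle length of $s$ under $f$ equals the smallest $n\in\mathbb{N}^+$ with $f^n(s)=s$, together with the fact that the product map satisfies $(f_1\times\cdots\times f_r)^n=f_1^n\times\cdots\times f_r^n$ for every $n\in\mathbb{N}^+$.

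For part (1), I would write $n_i$ for the cycle length of $s_i$ under $f_i$ and $n$ for the cycle length of $(s_1,\ldots,s_r)$ under $f:=f_1\times\cdots\times f_r$. Applying the componentwise formula for powers gives $f^n(s_1,\ldots,s_r)=(f_1^n(s_1),\ldots,f_r^n(s_r))$, so this tuple is fixed by $f^n$ if and only if $f_i^n(s_i)=s_i$ for every $i$, i.e.\ if and only if $n_i\mid n$ for every $i$. The least positive $n$ with this divisibility property is $\lcm(n_1,\ldots,n_r)$, which is exactly the claim.

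For part (2), I would first record the auxiliary identity $\ord(f)=\lcm(\ord(f_1),\ldots,\ord(f_r))$: since $f^n=\id$ holds if and only if $f_i^n=\id$ for all $i$, i.e.\ $\ord(f_i)\mid n$ for all $i$, the least such $n$ is the stated least common multiple. Now, using the hypothesis, I would choose for each $i$ a point $s_i\in S_i$ lying on a regular cycle of $f_i$, so that the cycle length of $s_i$ under $f_i$ equals $\ord(f_i)$. By part (1), the cycle length of $(s_1,\ldots,s_r)$ under $f$ is then $\lcm(\ord(f_1),\ldots,\ord(f_r))=\ord(f)$, so this tuple lies on a regular cycle of $f$, as required.

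There is no serious obstacle here; the only point requiring a little care is keeping the two distinct least-common-multiple computations separate—the one over the cycle lengths of the chosen fixed points (part (1)) and the one over the orders of the factors (used to identify $\ord(f)$ in part (2))—and then observing that the regularity hypothesis is precisely what forces these two quantities to coincide for the suitably chosen tuple.
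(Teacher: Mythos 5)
Your proof is correct; the paper itself offers no proof of this proposition (the \qed in the statement signals that it is regarded as elementary), and your argument---reducing both parts to the observation that $(f_1\times\cdots\times f_r)^n=f_1^n\times\cdots\times f_r^n$ and then computing the two least common multiples---is exactly the standard verification the paper takes for granted. In particular, your care in distinguishing the lcm of the cycle lengths (part (1)) from the lcm of the orders (part (2)), and noting that regularity makes them coincide for a suitably chosen tuple, is precisely the right way to phrase part (2).
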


Just as in \cite{Bor15a}, we will also work with the following notion:

\begin{definition}\label{fdgDef}
A \textbf{finite dynamical group} (\textbf{FDG}) is a finite group $G$ together with an endomorphism $\varphi$ of $G$.
\end{definition}

Hence any FDG is in particular an FDS, and an \textit{FDG homomorphism} is a map between the underlying groups of two FDGs which is both a group homomorphism and an FDS homomorphism. We will need the following elementary result in the proof of Theorem \ref{mainTheo1}(1):

\begin{corollary}\label{quotientCor}
Let $(G,\alpha)$ be a periodic FDG such that $\lambda(\alpha)>\frac{1}{2}$, and let $(Q,\beta)$ be a homomorphic image of it. If $\ord(\beta)=1$, then the group $Q$ is trivial.
\end{corollary}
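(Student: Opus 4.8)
The plan is to exploit the hypothesis $\ord(\beta)=1$, i.e.\ $\beta=\id_Q$, together with a direct counting argument on the long $\alpha$-cycle. Let $\eta:G\rightarrow Q$ be the surjective FDG homomorphism realizing $(Q,\beta)$ as a homomorphic image, so that $\eta\circ\alpha=\beta\circ\eta$ and $\eta[G]=Q$. First I would fix an element $g\in G$ lying on a cycle of $\alpha$ of maximal length $l=\Lambda(\alpha)$; by assumption $l=\lambda(\alpha)\cdot|G|>\frac{1}{2}|G|$. The points of this cycle are the $l$ pairwise distinct elements $g,\alpha(g),\dots,\alpha^{l-1}(g)$.

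The key observation is that $\eta$ is constant on this cycle. Indeed, from $\eta\circ\alpha=\beta\circ\eta$ and $\beta=\id_Q$ one gets, by induction on $i$, that $\eta(\alpha^i(g))=\beta^i(\eta(g))=\eta(g)$ for every $i$. Hence all $l$ distinct cycle elements lie in the single fiber $\eta^{-1}(\{\eta(g)\})$, which is a coset of $\ker\eta$ and therefore has exactly $|\ker\eta|=|G|/|Q|$ elements. This yields $l\leq|G|/|Q|$.

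Combining the two inequalities gives $\frac{1}{2}|G|<l\leq|G|/|Q|$, so $|Q|<2$ and thus $|Q|=1$, i.e.\ the group $Q$ is trivial, as claimed.

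I do not expect a genuine obstacle here: the statement is an elementary pigeonhole-type consequence of the definitions, and the only points requiring a little care are checking that $\ker\eta$ is $\alpha$-invariant (so that $\beta$ is well defined on $Q$, which is immediate from the intertwining relation) and that the long cycle really consists of $l$ pairwise distinct elements. The one genuinely load-bearing inequality is $\lambda(\alpha)>\frac{1}{2}$, which is exactly what forces $|Q|<2$ rather than merely $|Q|\leq$ some larger bound.
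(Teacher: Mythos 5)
Your proof is correct and is essentially the same as the paper's: both observe that $\beta=\id_Q$ forces the entire long cycle of $\alpha$ into a single coset of $\ker\eta$, and then conclude by Lagrange's theorem. The only cosmetic difference is that you bound the size of one fiber by $|G|/|Q|$ to get $|Q|<2$, whereas the paper notes directly that $|\ker\eta|>\frac{1}{2}|G|$ forces $\ker\eta=G$.
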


\begin{proof}
Note that $\ord(\beta)=1$ implies that $g_1^{-1}g_2\in\ker{\eta}$ whenever $g_1,g_2\in G$ lie on the same cycle of $\alpha$. Since $\alpha$ has a cycle of length greater than $\frac{1}{2}|G|$ by assumption, it follows that $|\ker{\eta}|>\frac{1}{2}|G|$, whence $\ker{\eta}=G$ by Lagrange's theorem, and we are done.
\end{proof}

\section{On the quotient \texorpdfstring{$\mao(G)/\Lambda(G)$}{mao(G)/Lambda(G)}}\label{sec3}

The sole purpose of this section is to prove the following:

\begin{proposition}\label{quotientProp}
$\sup_G{\mao(G)/\Lambda(G)}=\infty$, where $G$ ranges over finite groups.
\end{proposition}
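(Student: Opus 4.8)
The plan is to produce an explicit infinite family of finite groups $G_n$ along which the quotient $\mao(G_n)/\Lambda(G_n)$ tends to infinity. The engine of the construction is a single amplification principle for coprime direct products: if $H_1,\dots,H_n$ are finite groups of pairwise coprime orders, then each $H_i$ is the (characteristic) Hall subgroup of $G:=H_1\times\cdots\times H_n$ on its set of prime divisors, so $\Aut(G)=\prod_{i=1}^n\Aut(H_i)$ and every $\alpha\in\Aut(G)$ decomposes as $\alpha=(\alpha_1,\dots,\alpha_n)$ with $\alpha_i\in\Aut(H_i)$. Viewing each $(H_i,\alpha_i)$ as a periodic FDS and applying Proposition \ref{fdsProp}(1), the cycle length of $(g_1,\dots,g_n)$ under $\alpha$ is the $\lcm$ of the cycle lengths of the $g_i$ under $\alpha_i$, which is at most $\prod_i\Lambda(\alpha_i)\le\prod_i\Lambda(H_i)$; hence $\Lambda(G)\le\prod_{i=1}^n\Lambda(H_i)$. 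On the other hand, choosing in each factor an automorphism of maximal order $\mao(H_i)$ produces an automorphism of $G$ of order $\lcm_i\mao(H_i)$, so that $\mao(G)\ge\lcm_i\mao(H_i)$.

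This reduces the problem to exhibiting suitable building blocks. Concretely, I would seek an infinite sequence $H_1,H_2,\dots$ of finite groups such that (a) the orders $|H_i|$ are pairwise coprime, (b) the maximal automorphism orders $\mao(H_i)$ are pairwise coprime, and (c) there is a fixed $r>1$ with $\mao(H_i)/\Lambda(H_i)\ge r$ for all $i$. Granting such a family, the amplification principle gives, for $G_n=H_1\times\cdots\times H_n$, the estimate
$$\frac{\mao(G_n)}{\Lambda(G_n)}\ge\frac{\lcm_i\mao(H_i)}{\prod_i\Lambda(H_i)}=\frac{\prod_i\mao(H_i)}{\prod_i\Lambda(H_i)}=\prod_{i=1}^n\frac{\mao(H_i)}{\Lambda(H_i)}\ge r^n\xrightarrow[n\to\infty]{}\infty,$$
where the first equality uses coprimality of the $\mao(H_i)$. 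This would finish the proof.

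The building blocks must be non-nilpotent (and non-semisimple), since by Horo\v{s}evski\u{\i}'s results every automorphism of a nilpotent or semisimple finite group has a regular cycle \cite{Hor74a}, forcing $\Lambda(H)=\mao(H)$ and ratio $1$. The core of the argument is therefore to generalize Horo\v{s}evski\u{\i}'s no-regular-cycle example into an infinite coprime family. Here one exploits that an automorphism $\alpha$ of order $m$ fails to have a regular cycle exactly when $H=\bigcup_{p\mid m}\fix(\alpha^{m/p})$, a covering of $H$ by proper subgroups; since a group is never the union of two proper subgroups, $m$ must have at least three prime-power parts, and the largest cycle length is then capped strictly below $m$, yielding a ratio $r>1$. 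I would realize this for infinitely many coprime choices by letting the relevant primes grow (e.g. invoking Dirichlet's theorem to secure the required roots of unity) while keeping the combinatorial shape of the covering fixed.

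The main obstacle I anticipate is condition (c) in its strongest form: one must bound $\Lambda(H_i)$ over the \emph{entire} automorphism group $\Aut(H_i)$, not merely for the single large-order automorphism witnessing $\mao(H_i)$. Controlling the cycle structure of every automorphism requires a precise description of $\Aut(H_i)$ and of its orbits on $H_i$, which is exactly where the explicit, rigidly structured nature of the (generalized) Horo\v{s}evski\u{\i} example is indispensable. A secondary technical point is to arrange the parameters so that the $\mao(H_i)$ come out pairwise coprime while the per-block ratios remain uniformly bounded below by some $r>1$; note that this cannot be circumvented by keeping $\Lambda(H_i)$ bounded, because $\Lambda(H_i)$ is always at least the largest prime-power divisor of $\mao(H_i)$, and the latter must grow once the $\mao(H_i)$ are both coprime and unbounded.
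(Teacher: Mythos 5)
Your amplification lemma is correct and is a genuinely different mechanism from the paper's: with pairwise coprime $|H_i|$, each $H_i$ is a characteristic Hall subgroup of $G_n=H_1\times\cdots\times H_n$, so $\Aut(G_n)=\prod_i\Aut(H_i)$, and Proposition \ref{fdsProp}(1) indeed gives $\Lambda(G_n)\leq\prod_i\Lambda(H_i)$ while $\mao(G_n)\geq\lcm_i\mao(H_i)$. But this only \emph{reduces} the problem; the building blocks, whose existence is the entire content of the proposition, are never constructed, and your plan for constructing them is inconsistent with your own hypothesis (a). Coprimality of the $|H_i|$ means that each prime divides the order of at most one block, so the smallest prime divisor $p_{\min}(|H_j|)$ is unbounded along the family; in particular at most one block can have even order, so Horo\v{s}evski\u{\i}'s inversion-based example can be used at most once. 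Moreover, a block with $\mao(H_j)/\Lambda(H_j)>1$ has an automorphism $\alpha_j$ without a regular cycle, hence (as you note) $H_j=\bigcup_{p\mid m_j}\fix(\alpha_j^{m_j/p})$ with $m_j=\ord(\alpha_j)$; but a finite group whose smallest prime divisor is $p$ cannot be the union of fewer than $p+1$ proper subgroups (each has index $\geq p$, so the union of $k$ of them has at most $k(|H|/p-1)+1$ elements, forcing $k>p$). Hence the number of distinct prime divisors of $m_j$, i.e.\ the combinatorial shape of the covering, must grow without bound across the family -- the opposite of "keeping the combinatorial shape of the covering fixed". Your condition (b) has a similar status: you cannot verify pairwise coprimality of the exact values $\mao(H_i)$ without already possessing the full upper-bound analysis of $\Aut(H_i)$ (it is also stronger than needed; coprimality of the orders of chosen witness automorphisms would do, and the displayed equality $\lcm_i\mao(H_i)=\prod_i\mao(H_i)$ should be replaced accordingly).

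The obstacle you flag as "main" -- bounding $\Lambda(H_j)$ over \emph{all} of $\Aut(H_j)$ -- is precisely what the paper's proof does, and it does so inside a single group, making the coprime-product scaffolding unnecessary. The paper takes $G_C=B\rtimes V_C$ with $B=\prod_{i=1}^{3C}\mathbb{Z}/p_i\mathbb{Z}$ ($p_i$ the first $3C$ odd primes) and $V_C$ an $\mathbb{F}_2$-code of inverting automorphisms with full support in which every codeword has at least $C$ zero coordinates. Every automorphism of $G_C$ preserves each coset of $B$ and acts on it as a product of affine maps of the $\mathbb{Z}/p_i\mathbb{Z}$; on the $\geq C$ coordinates where the codeword vanishes, these maps have order dividing the \emph{even} number $p_i-1$, so every cycle length is at most $2^{-(C-1)}\prod_i p_i$, while the inner automorphism by $b_1\cdots b_{3C}$ has order $\prod_i p_i$, giving ratio $\geq 2^{C-1}$ from one group. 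Note that the saving $2^{C-1}$ comes exactly from reusing the prime $2$ across all $3C$ coordinates -- the very thing your coprimality requirement forbids. Your route can in principle be completed (blocks with acting primes $q_j$ growing, primes $p\equiv 1\ (\textup{mod}\ q_j)$ supplied by Dirichlet, covering codes of length $2(q_j+1)$ over $\mathbb{F}_{q_j}$, per-block ratio $\geq q_j$), but that completion contains all of the paper's coset-by-coset analysis and additional coding-theoretic work; as written, your proposal is a correct reduction plus an acknowledged gap, not a proof.
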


\begin{proof}
Fix $C\in\mathbb{N}^+$. Denote by $p_1,\ldots,p_{3C}$ the first $3C$ \textit{odd} primes in increasing order. For $i=1,\ldots,3c$, set $B_i:=\mathbb{Z}/p_i\mathbb{Z}$, and let $B:=\prod_{i=1}^{3C}{B_i}$. Observe that those automorphisms of $B$ that act by inversion on precisely one of the $B_i$ and identically on the others generate an elementary abelian $2$-subgroup $E\leq\Aut(B)$ of $\mathbb{F}_2$-dimension $3C$. Let $\psi:\mathbb{F}_2^{3C}\rightarrow E$ denote the $\mathbb{F}_2$-isomorphism mapping a vector $v$ to the automorphism $\psi(v)=:\alpha_v$ of $B$ acting identically on $B_i$ if the $i$-th component of $v$ is $0$, and otherwise by inversion.

Consider the $2$-dimensional subspace $U$ of $\mathbb{F}_2^3$ spanned by the vectors $(0,1,1)^t$ and $(1,0,1)^t$ together with the inclusion map $\iota:U\hookrightarrow\mathbb{F}_2^3$. Form an external direct sum $U_C$ of $C$ copies of $U$. The product of $C$ copies of $\iota$ (in the sense of Definition \ref{fdsDef}(3)) is an embedding $\iota':U_C\hookrightarrow\oplus_{i=1}^{C}{\mathbb{F}_2^3}$. Furthermore, there is an isomorphism $\sigma:\oplus_{i=1}^{C}{\mathbb{F}_2^3}\rightarrow\mathbb{F}_2^{3C}$ mapping the $i$-th standard basis vector, $i=1,2,3$, of the $j$-th summand, $j=1,\ldots,C$, of the source to the $(3(j-1)+i)$-th standard basis vector of $\mathbb{F}_2^{3C}$.

Let $W_C$ denote the image of $U_C$ under the embedding $\sigma\circ\iota'$ into $\mathbb{F}_2^{3C}$, and let $V_C\subseteq E$ denote the image of $W_C$ under $\psi$. For $i=1,\ldots,3C$, denote by $\pi_i:\mathbb{F}_2^{3C}\rightarrow\mathbb{F}_2$ the projection onto the $i$-th component. Observe that $W_C$ (resp. $V_C$) has the following two properties:

(i) For each $i=1,\ldots,3C$, there exists $v\in W_C$ such that $\pi_i(v)=1$. Hence for each $i=1,\ldots,3C$, there exists $\alpha_v\in V_C$ acting by inversion on $B_i$.

(ii) For each $v\in W_C$, $\pi_i(v)=0$ for at least $C$ values of $i\in\{1,\ldots,3C\}$. Thus each $\alpha_v\in V_C$ acts identically on at least $C$ of the $B_i$.

Let $G_C$ be the subgroup of $\Hol(B)=B\rtimes\Aut(B)$ generated by $B$ and $V_C$; then $G_C=B\rtimes V_C$. We will be done once we have showed that $\mao(G_C)/\Lambda(G_C)\geq 2^{C-1}$.

Let $\xi$ be an automorphism of $G_C$. Since the only elements of order $p_i$ in $G_C$ are the nontrivial elements of $B_i$, each $B_i$ (and hence $B$) is $\xi$-invariant; fixing a nontrivial element $b_i\in B_i$, we can write $\xi(b_i)=b_i^{k_i}$ with $k_i\in(\mathbb{Z}/p_i\mathbb{Z})^{\ast}$. Furthermore, since elements from different cosets of $B$ in $G_C$ act identically on different collections of the $B_i$, $\xi$ restricts to a permutation on each coset of $B$. Hence for studying the dynamics of $\xi$, we can partition $G_C$ into the cosets of $B$ and study the dynamics on each coset.

Let $\alpha_v\in V_C$. Observe that if $b$ is any element of $B$ having nontrivial $B_i$-component, where $i$ is such that $\pi_i(v)=0$, then $b\alpha_v$ does not have order $2$. Hence we can write $\xi(\alpha_v)=\prod_{i=1}^{3C}{b_i^{l_i}}\alpha_v$ with $l_i\in\mathbb{Z}/p_i\mathbb{Z}$ and $l_i=0$ if $\pi_i(v)=0$. It is not difficult to verify that the map $B\alpha_v\rightarrow B,b\alpha_v\mapsto b$, is an isomorphism between the FDSs $(B\alpha_v,\xi_{\mid B\alpha_v})$ and the FDS given by $B=\prod_{i=1}^{3C}{\mathbb{Z}/p_i\mathbb{Z}}$ together with the product of the affine self-transformations $\A_{l_i,k_i}$ of the $\mathbb{Z}/p_i\mathbb{Z}$ given by $x\mapsto k_ix+l_i$. Each $\A_{l_i,k_i}$ has a regular cycle (so that $\xi_{\mid B\alpha_v}$ has a regular cycle by Proposition \ref{fdsProp}(2)), and the order of $\A_{l_i,k_i}$ equals the order of $k_i\in(\mathbb{Z}/p_i\mathbb{Z})^{\ast}$ (which is a divisor of the even number $p_i-1$) if $k_i\not=1$, and otherwise, it equals the order of $l_i\in\mathbb{Z}/p_i\mathbb{Z}$ (which is a divisor of $p_i$). Hence we always have $\ord(\A_{l_i,k_i})\leq p_i$, and for those $i$ where $\pi_i(v)=0$, the order of $\A_{l_i,k_i}$ is a divisor of $p_i-1$. Since there are at least $C$ such $i$ by property (ii) above, this implies that the order (or largest cycle length) of $\xi_{\mid B\alpha_v}$ is bounded from above by $\frac{1}{2^{C-1}}\prod_{i,\pi_i(v)=0}{(p_i-1)}\prod_{i,\pi_i(v)=1}{p_i}\leq\frac{1}{2^{C-1}}\prod_{i=1}^{3C}{p_i}$.

On the other hand, considering the inner automorphism $\xi$ of $G_C$ with respect to the element $b_1\cdots b_{3C}$, $\xi$ fixes each $b_i$ (so that $k_i=1$ for all $i$ in the above notation), and $\xi(\alpha_v)=\prod_{i,\pi_i(v)=1}{b_i^2}\alpha_v$ for $v\in W_C$. In view of the above observations, this implies that every cycle length of $\xi$ is a product of some of the $p_i$, and by property (i) above, each $p_i$ occurs as a divisor of some cycle length. Hence $\ord(\xi)=\prod_{i=1}^{3C}{p_i}$. It follows that $\mao(G_C)/\Lambda(G_C)\geq\prod_{i=1}^{3C}{p_i}/(\frac{1}{2^{C-1}}\prod_{i=1}^{3C}{p_i})=2^{C-1}$.
\end{proof}

\begin{remark}
As mentioned earlier, the groups $G_C$ described in the proof of Proposition \ref{quotientProp} are generalizations of an example given by Horo\v{s}evski\u{\i}, see \cite[remarks after Corollary 1]{Hor74a}; Horo\v{s}evski\u{\i}'s example is our group $G_1$.
\end{remark}

\section{On the functions \texorpdfstring{$\mao_{\rel}$}{maorel} and \texorpdfstring{$\maffo_{\rel}$}{mafforel}}\label{sec4}

In this section, we study the functions assigning to each finite group the quotient of its maximum automorphism (resp. bijective affine map) order by the group order. We start with a very simple general lemma:

\begin{lemma}\label{abstractLem}
Let $f$ be a function from the class $\G^{\fin}$ of finite groups to the interval $\left(0,\infty\right)$ such that $f(G_1)=f(G_2)$ whenever $G_1\cong G_2$ and $f(G/\Rad(G))\geq f(G)$ for all finite groups $G$. Furthermore, assume that for finite semisimple groups $H$, $f(H)\to 0$ as $|H|\to\infty$; more explicitly, fix a function $g:\left(0,\infty\right)\rightarrow\left(0,\infty\right)$ such that for any $\rho\in\left(0,\infty\right)$, $f(H)<\rho$ whenever $H$ is a finite semisimple group with $|H|>g(\rho)$.

Then for any $\rho\in\left(0,\infty\right)$, if $G$ is a finite group such that $f(G)\geq\rho$, then $[G:\Rad(G)]\leq g(\rho)$.
\end{lemma}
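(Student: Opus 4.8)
The plan is to reduce the claimed index bound directly to the hypotheses on $f$. Set $Q := G/\Rad(G)$; this is a finite semisimple group, since by definition the solvable radical of $Q$ is trivial (any solvable normal subgroup of $G/\Rad(G)$ would pull back to a solvable normal subgroup of $G$ strictly larger than $\Rad(G)$, contradicting maximality). First I would invoke the monotonicity hypothesis $f(G/\Rad(G)) \geq f(G)$ together with the assumption $f(G) \geq \rho$ to conclude that $f(Q) \geq \rho$.

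Next I would argue by contraposition against the decay hypothesis. The function $g$ is chosen so that every finite semisimple group $H$ with $|H| > g(\rho)$ satisfies $f(H) < \rho$. Since $Q$ is semisimple and satisfies $f(Q) \geq \rho$, it cannot be that $|Q| > g(\rho)$; hence $|Q| \leq g(\rho)$. Because $|Q| = [G : \Rad(G)]$, this is exactly the desired conclusion $[G : \Rad(G)] \leq g(\rho)$.

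The argument is essentially a two-line syllogism once the semisimplicity of the quotient is noted, so there is no genuine obstacle here; the lemma is deliberately \emph{abstract}, packaging the bookkeeping so that the real content (verifying the monotonicity and decay hypotheses for the specific functions $\mao_{\rel}$ and $\maffo_{\rel}$) can be handled separately. If anything requires care, it is making sure the isomorphism-invariance hypothesis $f(G_1) = f(G_2)$ for $G_1 \cong G_2$ is what legitimizes treating $f(Q)$ as well-defined on the abstract quotient, and confirming that the phrase \enquote{$f(H) \to 0$ as $|H| \to \infty$} is used only through its explicit quantified form via $g$, so that no limiting or asymptotic reasoning beyond the stated inequality is needed. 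I would keep the exposition to exactly these steps rather than unpacking $g$ any further, since the strength of the bound is inherited entirely from whatever $g$ the later applications supply.
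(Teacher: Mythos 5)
Your proof is correct and follows essentially the same route as the paper: the paper's proof is the same two-step syllogism, chaining $f(G/\Rad(G))\geq f(G)\geq\rho$ with the defining property of $g$ applied to the semisimple quotient $G/\Rad(G)$. Your additional remarks (verifying semisimplicity of the quotient via pullback of solvable normal subgroups, and phrasing the second step as a contraposition) are just explicit versions of steps the paper leaves implicit.
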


\begin{proof}
By assumption, we have $f(G/\Rad(G))\geq f(G)\geq\rho$. Since $G/\Rad(G)$ is semisimple, this implies $[G:\Rad(G)]=|G/\Rad(G)|\leq g(\rho)$ by choice of $g$.
\end{proof}

Lemma \ref{abstractLem} summarizes our original approach to prove the weaker versions of Theorems \ref{mainTheo1}(3) and \ref{mainTheo2}(2) with cycle lengths instead of orders. Indeed, on the one hand, we observed that it follows from \cite[Lemma 2.1.4]{Bor15a} that $\lambda_{(\aff)}(G/N)\geq\lambda_{(\aff)}(G)$ for any finite group $G$ and $N\cha G$ (implying the first assumption $f(G/\Rad(G))\geq f(G)$ for these two $f$). On the other hand, assume that for some function $f:\G^{\fin}\rightarrow\left(0,\infty\right)$, we have $|H|\cdot f(H)\leq |H|^e$ for some $e\in\left(0,1\right)$ and all finite semisimple groups $H$. Then clearly, if $\rho\in\left(0,1\right)$ and $H$ is a finite semisimple group such that $|H|>\rho^{1/(e-1)}$, then $f(H)<\rho$, whence $g(\rho)$ from Lemma \ref{abstractLem} can be chosen as $\rho^{1/(e-1)}$. By \cite[Lemma 3.4]{Bor15b}, we have $\Lambda(H)\leq|H|^{\log_{60}(6)}$ and $\Lambda_{\aff}(H)\leq|H|^{\log_{60}(30)}$ for all finite semisimple groups $H$, thus explaining the exponents in \cite[Theorem 1.1.1]{Bor15b}.

Moreover, we know by \cite[Theorem 2.2.3]{Bor15b} that $\mao(H)=\Lambda(H)$ and $\maffo(H)=\Lambda_{\aff}(H)$ for all finite semisimple groups $H$. Hence by Lemma \ref{abstractLem} and the remarks from the last paragraph, Theorems \ref{mainTheo1}(3) and \ref{mainTheo2}(2) are clear once we have proved the following:

\begin{lemma}\label{radqLem}
Define functions $\mao_{\rel},\maffo_{\rel}:\G^{\fin}\rightarrow\left(0,\infty\right)$ by $\mao_{\rel}(G):=\frac{1}{|G|}\mao(G)$ and $\maffo_{\rel}(G):=\frac{1}{|G|}\maffo(G)$. Then $\mao_{\rel}(G/\Rad(G))\geq\mao_{\rel}(G)$ and $\maffo_{\rel}(G/\Rad(G))\geq\maffo_{\rel}(G)$ for all finite groups $G$.
\end{lemma}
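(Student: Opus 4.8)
The plan is to prove the two inequalities simultaneously by exhibiting, for any finite group $G$ and any automorphism (resp.\ bijective affine map) realizing a large order, an induced automorphism (resp.\ bijective affine map) on the quotient $G/\Rad(G)$ whose order is at least $\frac{1}{[G:\Rad(G)]}$ times the original order divided by $|\Rad(G)|$ — more precisely, whose order times $|G/\Rad(G)|$ dominates the original order times $|G|$ in the appropriate ratio. The key structural fact I would rely on is that $\Rad(G)$ is characteristic in $G$, so every $\varphi\in\Aut(G)$ descends to an automorphism $\bar\varphi$ of $G/\Rad(G)$, and similarly every $\A_{x,\varphi}\in\Aff(G)$ descends to $\A_{\bar x,\bar\varphi}\in\Aff(G/\Rad(G))$. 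The map $\Aut(G)\to\Aut(G/\Rad(G))$ and $\Aff(G)\to\Aff(G/\Rad(G))$ are group homomorphisms.

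First I would reduce the claim to a purely numerical statement about orders under these descent maps. Writing $N=\Rad(G)$, $\overline{G}=G/N$, and letting $\varphi$ be an automorphism with $\ord(\varphi)=\mao(G)$, the goal $\mao_{\rel}(\overline G)\ge\mao_{\rel}(G)$ unwinds to $|N|\cdot\mao(\overline G)\ge\ord(\varphi)$, since $|\overline G|=|G|/|N|$. So it suffices to show that the order of $\varphi$ exceeds the order of its image $\bar\varphi\in\Aut(\overline G)$ by a factor at most $|N|$, i.e.\ $\ord(\varphi)\le|N|\cdot\ord(\bar\varphi)$. The natural way to get this is to control the \emph{kernel} of the descent homomorphism on the relevant cyclic subgroup: the power $\varphi^{\ord(\bar\varphi)}$ lies in the kernel, i.e.\ it induces the identity on $\overline G$, which means it stabilizes every coset $gN$. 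An automorphism stabilizing all cosets of $N$ and acting trivially modulo $N$ is an element of a group whose order I would bound by $|N|$ (or by $|N|$ to an appropriate power, in which case the exponent bookkeeping must be tracked). The cleanest estimate would come from showing $\ord(\varphi)/\ord(\bar\varphi)$ divides the exponent of the stabilizer subgroup $\{\psi\in\Aut(G):\psi\text{ induces }\id\text{ on }\overline G\}$, or better, from the FDS/FDG viewpoint: restricting $\varphi$ to a single coset $gN$, the cycle structure there is governed by an affine-type map on $N$, whose order is at most $|N|$ times the order of the induced map on $\overline G$.

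The affine case $\maffo_{\rel}$ would run along identical lines, with $\A_{x,\varphi}$ in place of $\varphi$ and the descent $\A_{x,\varphi}\mapsto\A_{\bar x,\bar\varphi}$; the same coset-restriction argument applies since $\A_{x,\varphi}$ permutes the cosets of $N$ according to the underlying permutation of $\overline G$ induced by $\A_{\bar x,\bar\varphi}$, and on each returning cycle the induced self-map of $N$ is affine, hence of order at most $|N|$. I would set up both cases uniformly by treating $(G,\varphi)$ as a periodic FDG and invoking the product/quotient behavior from Section~\ref{sec2}, noting that Corollary~\ref{quotientCor} is the analogous (sharper) phenomenon in the $\lambda>\frac12$ regime.

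The hard part, I expect, will be the precise order bound on the kernel contribution — specifically proving $\ord(\varphi)\le|N|\cdot\ord(\bar\varphi)$ rather than a weaker estimate with $|N|$ replaced by the order of the stabilizer group (which could be much larger than $|N|$). The subtlety is that an automorphism inducing the identity on $\overline G$ need not itself have order dividing $|N|$; such maps form a subgroup of $\Aut(G)$ whose structure depends on $\mathrm{Hom}(\overline G, Z(N))$ and on the action on $N$, so a naive bound overshoots. The resolution should come from working cycle-by-cycle on the cosets of $N$ rather than with the abstract automorphism group: on the orbit of a single coset, once one has returned to the starting coset after $\ord(\bar\varphi)$ steps, the resulting self-map of that coset (an $N$-torsor) is a bijective affine self-transformation of a set of size $|N|$, so its order divides $\lcm$ of cycle lengths each at most $|N|$, giving the clean factor $|N|$ after taking the least common multiple across all cosets. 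Making this coset-orbit bookkeeping rigorous — and checking it yields exactly $\ord(\varphi)\mid\ord(\bar\varphi)\cdot|N|$ and not merely $\ord(\varphi)\le\ord(\bar\varphi)\cdot|N|^2$ — is where the real care is needed.
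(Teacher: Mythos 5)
Your reduction of the lemma to the single inequality $\ord(\varphi)\le|N|\cdot\ord(\overline{\varphi})$ (with $N=\Rad(G)$, plus its affine analogue) is correct, and your coset picture is the right one: writing $m=\ord(\overline{\varphi})$, the first-return map on a coset $gN$ is the affine map $\A_{b_g,\,\varphi^m_{\mid N}}$ of $N$, where $b_g=g^{-1}\varphi^m(g)\in N$. The gap is in the step where you bound these return maps. You argue that such a map, being a bijective affine self-transformation of a set of size $|N|$, \enquote{has order dividing the lcm of cycle lengths each at most $|N|$, giving the clean factor $|N|$}. That is a non sequitur: the order of a permutation \emph{is} the lcm of its cycle lengths, and an lcm of numbers each at most $|N|$ can vastly exceed $|N|$ (Landau's function), so \enquote{each cycle length is at most $|N|$} proves nothing. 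What you actually need is (i) that an affine map of $N$ has order at most $|N|$, and, more than that, (ii) that the lcm of $\ord(\A_{b_g,\varphi^m_{\mid N}})$ over all the (in general distinct) translation parts $b_g$, one per $\overline{\varphi}$-orbit, is at most $|N|$. Statement (i) already requires the solvability of $N$ --- which your sketch never invokes --- being an affine extension of Horo\v{s}evski\u{\i}'s theorem that automorphism orders of finite solvable groups are bounded by the group order; statement (ii) is strictly stronger and is the real mathematical content of the lemma. Neither is \enquote{bookkeeping}.

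The paper's proof is organized precisely so as to make (ii) provable. It does not pass to $G/\Rad(G)$ in one step; instead it inducts on $|\Rad(G)|$, quotienting by a minimal $\alpha$-invariant \emph{elementary abelian} normal subgroup $B$, so that the required bound becomes Lemma~\ref{elAbLem}: $\lcm_{x\in B}\ord(\A_{x,\beta})\le|B|$ for $B$ elementary abelian. Even in that special case the bound is delicate: using $\ord(\A_{x,\beta})=\ord(\beta)\cdot\ord(\sh_{\beta}(x))$, the lcm in question equals either $\ord(\beta)$ or $p\cdot\ord(\beta)$, and in the second case one must show $\ord(\beta)\le\frac{1}{p}|B|$, which the paper does via the primary rational canonical form of $\beta$ and Elspas's formula for the order of the companion matrix of $P(X)^k$ (the existence of a nontrivial fixed point forcing $P(X)=X-1$). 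Your one-step approach would need an analogue of Lemma~\ref{elAbLem} for arbitrary solvable $N$, which you neither prove nor reduce to anything known; as it stands, the proposal is missing the core of the argument.
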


Before proving Lemma \ref{radqLem}, we show:

\begin{lemma}\label{elAbLem}
Let $B$ be a finite elementary abelian group, and fix $\beta\in\Aut(B)$. Then $\lcm_{x\in B}{\ord(\A_{x,\beta})}\leq|B|$.
\end{lemma}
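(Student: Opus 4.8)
The plan is to pass to additive notation, writing $B=\mathbb{F}_p^n$ (so that $|B|=p^n$) and regarding $\beta$ as an element of $\GL_n(\mathbb{F}_p)$, and to compute $\ord(\A_{x,\beta})$ explicitly. Iterating $\A_{x,\beta}\colon g\mapsto x+\beta(g)$ gives $\A_{x,\beta}^k(g)=\bigl(\sum_{j=0}^{k-1}\beta^j\bigr)(x)+\beta^k(g)$, so $\A_{x,\beta}^k=\id$ if and only if $\beta^k=\id$ and $\bigl(\sum_{j=0}^{k-1}\beta^j\bigr)(x)=0$. Writing $m:=\ord(\beta)$, the first condition forces $m\mid k$; and for $k=mt$ one has $\sum_{j=0}^{k-1}\beta^j=t\,N$ where $N:=\sum_{j=0}^{m-1}\beta^j$ (using $\beta^m=\id$). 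Since the characteristic is $p$, the least $t$ with $tN(x)=0$ is $1$ when $N(x)=0$ and $p$ otherwise. Hence $\ord(\A_{x,\beta})$ equals $m$ if $N(x)=0$ and $mp$ if $N(x)\neq0$, so that
$$\lcm_{x\in B}\ord(\A_{x,\beta})=\begin{cases} mp, & N\neq0,\\ m, & N=0.\end{cases}$$
The lemma thus reduces to two bounds: always $m\leq p^n$, and $m\leq p^{n-1}$ whenever $N\neq0$.

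For the first bound I would observe that $\beta$ is a unit of order $m$ in the finite commutative ring $R:=\mathbb{F}_p[\beta]\subseteq\operatorname{End}(B)$, whose $\mathbb{F}_p$-dimension equals $\deg\mu_\beta\leq n$, where $\mu_\beta$ is the minimal polynomial of $\beta$. Since $0\in R$ is never a unit, $m=\ord(\beta)$ divides $|R^\times|\leq|R|-1=p^{\deg\mu_\beta}-1\leq p^n-1<p^n$.

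For the second bound, note that $(\beta-\id)N=\beta^m-\id=0$, so $\im(N)\subseteq\ker(\beta-\id)$; hence $N\neq0$ forces a nonzero fixed vector, i.e.\ eigenvalue $1$. I would then split $B=V_1\oplus V'$ into $\beta$-invariant subspaces, where $V_1$ is the generalized eigenspace for the eigenvalue $1$ (of dimension $n_1\geq1$) and $V'$ collects the remaining generalized eigenspaces (dimension $n'=n-n_1$). Here $\beta|_{V_1}$ is unipotent, so its order is a power of $p$ bounded by $p^{\lceil\log_p n_1\rceil}\leq p^{n_1-1}$ (the largest Jordan block has size $\leq n_1$, and $\lceil\log_p t\rceil\leq t-1$ for all $t\geq1$), while $\ord(\beta|_{V'})\leq p^{n'}$ by the ring argument above (trivially so if $V'=0$). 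Since $m=\lcm(\ord(\beta|_{V_1}),\ord(\beta|_{V'}))$, we obtain $m\leq p^{n_1-1}\cdot p^{n'}=p^{n-1}$, whence $mp\leq p^n$, finishing the proof.

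The only delicate point is the case $N\neq0$: one must extract from $N\neq0$ precisely the geometric fact that a full dimension is lost to the fixed space, so that the extra factor of $p$ contributed by the unipotent part on $V_1$ is absorbed into the dimension count. The elementary inequality $\lceil\log_p t\rceil\leq t-1$ — which yields the bound $p^{n_1-1}$ rather than $p^{n_1}$ on the unipotent order — is exactly what closes the gap and shows that the estimate $\leq|B|$ is sharp, the single Jordan block $\bigl(\begin{smallmatrix}1&1\\0&1\end{smallmatrix}\bigr)$ over $\mathbb{F}_2$ already attaining equality.
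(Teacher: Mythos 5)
Your proof is correct, and while it shares the paper's overall skeleton, it executes both key bounds by genuinely different means. Like the paper, you start from the identity $\ord(\A_{x,\beta})=\ord(\beta)\cdot\ord(N(x))$ (the paper writes $N(x)$ multiplicatively as $\sh_{\beta}(x)$ and cites \cite{Bor15b} for this, where you re-derive it), you reduce the lemma to the two bounds $m\leq|B|$ in general and $m\leq|B|/p$ when $N\neq 0$, and you exploit the same two facts: that $\im(N)$ consists of fixed points of $\beta$, and that unipotent orders obey $p^{\lceil\log_p(k)\rceil}\leq p^{k-1}$. The differences lie in how these are established. For $m\leq|B|$ the paper simply cites Horo\v{s}evski\u{\i}'s bound $\mao(B)\leq|B|$ \cite[Theorem 2]{Hor74a}, whereas your count of units in the commutative ring $\mathbb{F}_p[\beta]$ is self-contained and even gives the sharper bound $m\leq p^{\deg\mu_{\beta}}-1$. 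For the case $N\neq 0$, the paper argues by contradiction and by induction over the blocks of the primary rational canonical form, reducing to a single companion matrix of $P(X)^k$, where the nontrivial fixed point forces $P(X)=X-1$ and Elspas's order formula \cite{Els59a} yields $\ord(\beta)=p^{\lceil\log_p(k)\rceil}\leq p^{k-1}$; you instead split $B$ once and for all into the generalized eigenspace $V_1$ for the eigenvalue $1$ and a $\beta$-invariant complement $V'$, bound the unipotent restriction $\beta_{\mid V_1}$ by $p^{n_1-1}$, bound the complement by the ring argument, and multiply. Your route buys self-containedness (no appeal to \cite{Hor74a}, \cite{Els59a}, or \cite{Bor15b}) and a cleaner reduction: the Fitting-type splitting at the eigenvalue $1$ isolates exactly the part of $\beta$ responsible for the extra factor of $p$, and it sidesteps the paper's induction over canonical-form blocks, in which one must track how the nonvanishing of $N$ passes to a block (the restriction of $N$ to a block $B_i$ equals $(m/m_i)\sum_{j<m_i}(\beta_{\mid B_i})^j$, a subtlety your version never has to confront). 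What the paper's route buys is brevity on the page, since its two linear-algebra inputs are outsourced to the literature.
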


\begin{proof}
For $x\in B$, let $\sh_{\beta}(x):=x\beta(x)\cdots\beta^{\ord(\beta)-1}(x)\in B$. We observed in \cite{Bor15b} that $\ord(\A_{x,\beta})=\ord(\beta)\cdot\ord(\sh_{\beta}(x))$. Hence the least common multiple in question is either equal to $\ord(\beta)$, which is bounded from above by $|B|$ by \cite[Theorem 2]{Hor74a}, or to $p\cdot\ord(\beta)$, where $p$ is the prime base of $|B|$. Hence assume, for a contradiction, that some $\sh_{\beta}(x)$ is nontrivial and that $\ord(\beta)>\frac{1}{p}|B|$. Considering the primary rational canonical form of $\beta$ as an $\mathbb{F}_p$-automorphism (corresponding to a decomposition of $B$ into a maximal number of subspaces that are cyclic for $\beta$), we may assume by induction that $\beta$ can be represented by the companion matrix of $P(X)^k$ for some irreducible $P(X)\in\mathbb{F}_p[X]$. Note that all $\sh_{\beta}(x)$ are fixed points of $\beta$, and so $\beta$ has a nontrivial fixed point by assumption. This implies that for some nonzero $Q(X)\in\mathbb{F}_p[X]$ of degree less than $\deg(P(X)^k)$, we have $X\cdot Q(X)\equiv Q(X)\Mod{P(X)^k}$, or equivalently $P(X)^k\mid Q(X)\cdot (X-1)$. Since $P(X)^k\nmid Q(X)$, it follows that $P(X)\mid X-1$, and thus $P(X)=X-1$ by irreducibility. In view of the formula for the order of the companion matrix of $P(X)^k$ (first proved by Elspas \cite[Appendix II, 9]{Els59a}, see also \cite[Theorem 3.11]{LN97a} and \cite[Theorem 5 and remarks afterward]{Her05a}), it follows that $\ord(\beta)=p^{\lceil\log_p(k)\rceil}\leq p^{k-1}=\frac{1}{p}|B|$, a contradiction.
\end{proof}

\begin{proof}[Proof of Lemma \ref{radqLem}]
We only prove that $\maffo_{\rel}(G/\Rad(G))\geq\maffo_{\rel}(G)$, as the argument for $\mao_{\rel}$ is similar. The proof is by induction on $|\Rad(G)|$. For the induction step, fix $A=\A_{x,\alpha}\in\Aff(G)$ such that $\ord(A)=\maffo(G)$. Following the argument in \cite[proof of Theorem 2]{Hor74a}, we may fix a minimal $\alpha$-invariant elementary abelian normal subgroup $B$ of $G$. By the induction hypothesis, it is sufficient to show that $\maffo_{\rel}(G/B)\geq\maffo_{\rel}(G)$. Denoting by $\tilde{A}=\A_{\pi(x),\tilde{\alpha}}$ (where $\pi:G\rightarrow G/B$ is the canonical projection and $\tilde{\alpha}$ the induced automorphism on $G/B$) the induced affine map of $G/B$, we find that by \cite[Lemma 2.1.4]{Bor15a}, every cycle length of $A$ is a product of some cycle length of $\tilde{A}$ with some cycle length of a bijective affine map of $B$ of the form $\A_{b,\alpha_{\mid B}}$. Hence the order of $A$ divides the product of $\ord(\tilde{A})$ with $\lcm_{b\in B}(\A_{b,\alpha_{\mid B}})$. In particular, by Lemma \ref{elAbLem}, $\ord(A)\leq\ord(\tilde{A})\cdot |B|$. It follows that $\maffo_{\rel}(G)=\frac{1}{|G|}\ord(A)\leq\frac{1}{|G/B|}\ord(\tilde{A})\leq\maffo_{\rel}(G/B)$.
\end{proof}

\section{Proof of the main results}\label{sec5}

As explained in Section \ref{sec4}, Theorems \ref{mainTheo1}(3) and \ref{mainTheo2}(2) follow from Lemmata \ref{abstractLem} and \ref{radqLem} as well as the remarks between them, and deriving Theorem \ref{mainTheo1}(2) (resp. \ref{mainTheo2}(1)) from Theorem \ref{mainTheo1}(3) (resp. \ref{mainTheo2}(2)) is like in \cite[proof of Corollary 1.1.2, Section 3]{Bor15b}. Hence it only remains to prove Theorem \ref{mainTheo1}(1).

Fix an automorphism $\alpha$ of $G$ such that $\ord(\alpha)>\frac{1}{2}|G|$. We prove that $G$ is abelian by induction on $|G|$. For the induction step, observe that $G$ cannot be semisimple, since otherwise, by \cite[Theorem 1]{Hor74a}, $\alpha$ would have a regular cycle and hence $G$ would be abelian by \cite[Theorem 1.1.7]{Bor15a}, contradicting its semisimplicity.

Like in the proof of Lemma \ref{radqLem}, following the argument in \cite[proof of Theorem 2]{Hor74a}, we fix a minimal $\alpha$-invariant elementary abelian normal subgroup $B$ of $G$. We may of course assume that $B$ is proper in $G$. Denote by $\tilde{\alpha}$ the induced automorphism of $G/B$, set $m:=\ord(\tilde{\alpha})$, $n:=\ord(\alpha_{\mid B})$ and denote by $C$ the set of fixed points of $\alpha^m$ in $B$. Horo\v{s}evski\u{\i} proceeded to show that either $C=\{1\}$ or $C=B$ (by minimality of $B$) and to derive upper bounds for $\ord(\alpha)$ in both cases, which imply that $\ord(\alpha)\leq m\cdot|G/B|$ in any case and thus $m\geq\ord(\alpha)/|G/B|=|B|\cdot\ord(\alpha)/|G|>\frac{1}{2}|B|$, whence $G/B$ is abelian by the induction hypothesis.

In particular, we have $G'\leq B$ and $\lambda(\tilde{\alpha})>\frac{1}{2}$ by \cite[Corollary 1]{Hor74a}. Consider the homomorphism $\varphi:G\rightarrow\Aut(B)$ corresponding to the conjugation action of $G$ on $B$. Since $B$ is abelian, we have $B\leq\ker(\varphi)$, and so there is a homomorphism $\overline{\varphi}:G/B\rightarrow\Aut(B)$ such that $\overline{\varphi}\circ\pi_B=\varphi$, where $\pi_B:G\rightarrow G/B$ is the canonical projection.

Now the kernel of $\overline{\varphi}$ consists by definition of those $\pi_B(g)\in G/B$ such that $gB\subseteq\C_G(B)$. Clearly, since $B$ is $\alpha$-invariant, so ist $\C_G(B)$, and thus $\ker(\overline{\varphi})$ is $\tilde{\alpha}$-invariant. It follows that there exists an automorphism $\overline{\alpha}$ on the image $\overline{\varphi}(G/B)\leq\Aut(B)$ such that the following diagram commutes:

\begin{center}
\begin{tikzpicture}
\matrix (m) [matrix of math nodes, row sep=3em,
column sep=3em]
{ G/B & G/B \\
\overline{\varphi}(G/B) & \overline{\varphi}(G/B) \\
};
\path[->]
(m-1-1) edge node[above] {$\tilde{\alpha}$} (m-1-2)
(m-1-1) edge node[left] {$\overline{\varphi}$} (m-2-1)
(m-1-2) edge node[right] {$\overline{\varphi}$} (m-2-2)
(m-2-1) edge node[below] {$\overline{\alpha}$} (m-2-2);
\end{tikzpicture}
\end{center}

In other words, the FDG $(\overline{\varphi}(G/B),\overline{\alpha})$ is the image of the FDG $(G/B,\tilde{\alpha})$ under the FDG homomorphism $\overline{\varphi}:(G/B,\tilde{\alpha})\rightarrow(\overline{\varphi}(G/B),\overline{\alpha})$. By this definition of $\overline{\alpha}$, it is clear that $\ord(\overline{\alpha})\mid\ord(\tilde{\alpha})=m$.

We give an alternative definition of $\overline{\alpha}$. The element $\overline{\varphi}(gB)\in\overline{\varphi}(G/B)$, which is by definition the restriction of conjugation by $g$ to $B$, is mapped by $\overline{\alpha}$ to $\overline{\alpha}(\overline{\varphi}(gB))=\overline{\varphi}(\tilde{\alpha}(gB))=\overline{\varphi}(\alpha(g)B)$, which is the restriction of conjugation by $\alpha(g)$ to $B$. But this implies that $\overline{\alpha}$ is the restriction of conjugation by $\alpha_{\mid B}$ in $\Aut(B)$ to its subgroup $\overline{\varphi}(G/B)$. In particular, $\ord(\overline{\alpha})\mid\ord(\alpha_{\mid B})=n$.

We now distinguish two cases. First, assume that $B$ is cyclic. Then $\Aut(B)$ is abelian, and so by the second definition of $\overline{\alpha}$, it is clear that $\overline{\alpha}=\id_{\overline{\varphi}(G/B)}$. By Corollary \ref{quotientCor}, this implies that $\overline{\varphi}$ is the trivial homorphism $G/B\rightarrow\Aut(B)$, and by definition of $\overline{\varphi}$, this just means that $B\leq\zeta G$. In particular, we have $G'\leq\zeta G$, whence $G$ is nilpotent of class $2$. By \cite[Corollary 1]{Hor74a}, this implies that $\lambda(\alpha)=\ord(\alpha)>\frac{1}{2}|G|$, and so $G$ is abelian by \cite[Theorem 1.1.7]{Bor15a}.

Now assume that $B\cong(\mathbb{Z}/p\mathbb{Z})^n$ for some prime $p$ and $n\geq 2$. By the argument in \cite[proof of Theorem 2]{Hor74a}, if $C=B$, we have $\ord(\alpha)\leq m\cdot p\leq |G/B|\cdot\frac{1}{p}|B|=\frac{1}{p}|G|$, a contradiction. Hence $C=\{1\}$, whence by \cite[Lemma 3a]{Hor74a}, we have $\ord(\alpha)=\lcm(m,n)$. If $\gcd(m,n)>1$, it follows that $\ord(\alpha)\leq\frac{1}{2}\cdot m\cdot n\leq\frac{1}{2}\cdot|G/B|\cdot|B|\leq\frac{1}{2}|G|$, a contradiction. Therefore, $\gcd(m,n)=1$, which implies that $\ord(\overline{\alpha})=1$. Now repeat the argument from the first case to conclude the proof.\qed

\end{document}